\documentclass{article}

\usepackage{arxiv}

\usepackage[utf8]{inputenc} 
\usepackage[T1]{fontenc}    
\usepackage{hyperref}       
\usepackage{url}            
\usepackage{booktabs}       
\usepackage{amsfonts}       
\usepackage{nicefrac}       
\usepackage{microtype}      
\usepackage{lipsum}
\usepackage{graphicx}

\usepackage{algorithm}
\usepackage{algpseudocode}
\usepackage{tikz}
\usepackage[nocomma,short]{optidef}
\usepackage{natbib}
\usepackage{amsthm}
\usepackage{amsmath}
\usepackage{amssymb}
\newtheorem{theorem}{Theorem}[section]
\newtheorem{proposition}[theorem]{Proposition}

\theoremstyle{definition}
\newtheorem{definition}[theorem]{Definition}

\theoremstyle{remark}

\title{Physics-Informed Tensor Completion with Application to Distribution System State Estimation}

\author{
 Alexander While \\
  The Ohio State University\\
  Columbus, OH, USA \\
  \texttt{while.1@osu.edu} \\
   \And
 Anil Aswani \\
  University of California Berkeley\\
  Berkeley, CA, USA \\
  \texttt{aaswani@berkeley.edu} \\
  \And
 Chen Chen \\
  The Ohio State University\\
  Columbus, OH, USA \\
  \texttt{chen.8018@osu.edu} \\
  \AND
  William N. Caballero \\
  Air Force Institute of Technology \\
   Wright-Patterson Air Force Base, OH, USA\\
  \texttt{william.caballero@afit.edu} \\
}

\begin{document}

\maketitle
\begin{abstract}
State estimation in power distribution networks often involves sparse, noisy, and heterogeneous measurements, making classical weighted least-squares methods unreliable in low-observability regimes. To meet this challenge we develop a physics-informed low-rank tensor completion method. The method encodes voltage phasor components, voltage magnitudes, and power injections over time in a low-rank tensor, while incorporating physics through convex residual penalties. For distribution network state estimation, these residuals are instantiated using linearized power-flow equations, yielding a convex optimization problem over a tensor gauge ball, equivalently represented as a polytope implicitly described via integer programming. We solve this problem using globally convergent blended conditional gradients, resulting in a general-purpose algorithmic framework that can be adapted across different domains and applications where low-rank structure and convex physics residuals are available. We also derive a deterministic error bound showing that, under a restricted stability condition, the estimation error is controlled by measurement noise and physics-model mismatch. Numerical experiments show that the proposed method substantially improves state recovery compared with purely data-driven tensor completion and weighted least-squares, especially in low-observability regimes with missing data.
\end{abstract}%






\section{Introduction}
State estimation is the task of reconstructing the full electrical operating state of a network, typically the voltage magnitude and phase angle at each node, from incomplete and noisy measurements (see, e.g., \citep{monticelli2012state}). This problem is critical to reliable operation of modern electric grids, which involves careful management of complex power flows across a network. State estimation is usually separated between transmission and distribution systems because these networks are operated by different entities and have different physical and informational characteristics. Transmission systems are typically meshed and heavily monitored, whereas distribution systems are often radial or weakly meshed and only sparsely instrumented, which gives rise to more frequent low-observability issues: measurements are sparse, noisy, heterogeneous, may be missing, and are collected at different temporal resolutions.

Classical weighted least-squares (WLS) state estimation is the standard approach to state estimation; however, it relies on sufficient measurement redundancy and accurate network models, and therefore can become unreliable or even infeasible when only a small fraction of the relevant quantities can be observed. Tensor completion provides a natural alternative in such settings, as it incorporates a tensor rank constraint that can help capture patterns among measurements to infer missing data: loads and generation typically vary smoothly over time; neighboring nodes exhibit spatial dependence; and network physics couple voltage phasors and power injections.

Recent work has therefore studied tensor-completion methods for distribution system state estimation (DSSE). Model-free tensor-completion approaches recover missing data entries or states by fitting low-rank tensor factorizations to observed measurements \citep{Liu2022stateest}. These methods are attractive because they can operate without an accurate feeder model, but the resulting estimates are not guaranteed
to satisfy physical laws. Model-based tensor-completion methods address this limitation by incorporating power-flow equations, Ohm's law, or other distribution-network constraints into the completion problem through physics-informed or physics-regularized frameworks \citep{madbhavi2020tensor,madbhavi2021stateest,Ghasemkhani2021pmu,zhu2024accelerated,yu2026noisy}. Physics-informed tensor completion has also recently emerged outside power systems, including reconstructed infrared thermography \citep{yang2026research}, and passive localization in sensor networks \citep{zhang2026reconstructing}.

A common limitation of this literature is that the modeling and algorithmic
choices are bespoke and tightly coupled to the application. Existing methods often rely on application-specific tensor structures, domain-dependent factorizations,
hand-derived algorithmic updates, or simulator-specific adjoint calculations.
Thus, they typically focus on how to incorporate one domain's
physics into one particular tensor-completion model. Another issue is reliance on Alternating Direction Method of Multipliers (ADMM) or Alternating Least Squares (ALS), which are standard options that can often deliver good solutions, but do not in general certify global optimality.

This paper addresses both gaps. We adapt a globally convergent Blended Conditional Gradients (BCG) approach to a physics-informed tensor completion model that accommodate a wide range of physics. Our approach builds on recent work on gauge-norm tensor completion
\citep{nonnegten,chen2025tensor,pan2023accelerated}, which provides a
data-efficient convex model for low-rank tensor recovery.  We extend this model by incorporating physical information through convex residual penalties. In the DSSE application considered here, these residuals are instantiated using linearized power-flow equations, but the formulation applies to a broad class of physics that can be reasonably expressed, approximated, or relaxed in terms of such penalties. 

We also provide a deterministic error analysis for our proposed state estimator. The
analysis introduces a restricted stability condition over the feasible
perturbation set induced by the tensor gauge constraint. Under this condition, we show that estimation error is controlled by two quantities: the measurement noise on observed entries and the physics-model mismatch introduced by the linearized physical residual.

Numerical experiments demonstrate the practical benefits of the proposed approach.  The results show that incorporating physics through convex residual penalties substantially improves recovery of voltage magnitudes and angles, especially in low-observability regimes with high missing-data rates. These experiments support the use of physics-informed tensor completion as a data-efficient state estimation tool for distribution networks.

\section{Models and Background}
This section describes state estimation, weighted least squares, and tensor completion.

\begin{table}
\centering
\caption{Symbology. Symbols will also be defined as they appear.}
\begin{tabular}{@{}ccl@{}}
\toprule
Symbol & Domain & Meaning                                                 \\ \midrule
$M$     & $\mathbb{Z}_{> 0}$& Number of nodes in distribution system \\
$N$     &$\mathbb{Z}_{> 0}$& Number of quantities present in state tensor \\
$T$ &$\mathbb{Z}_{> 0}$& Number of time periods observed \\
$v$ &$\mathbb{C}^{T \times M}$& Voltage phasor \\
$Re(v)$ &$\mathbb{R}^{T \times M}$& Real voltage \\
$Im(v)$ &$\mathbb{R}^{T \times M}$& Imaginary voltage \\
$|v|$ &$\mathbb{R}^{T \times M}$& Voltage magnitude (p.u.) \\
$p$ &$\mathbb{R}^{T \times M}$& Active power injection (p.u.) \\
$q$ &$\mathbb{R}^{T \times M}$& Reactive power injection (p.u.) \\
$\mathcal{Y}$   &$\mathbb{R}^{T \times N\times M}$& Measurement tensor \\
$\mathcal{W}$   &$\{0,1\}^{T \times N\times M}$& Observation mask tensor \\
$\psi$  &$\mathbb{R}^{T \times N\times M}$& State estimation tensor \\
$||\cdot||_2$ & $\mathbb{R}_{\geq 0}$ &$L^2$ norm \\
$||\cdot||_F$ & $\mathbb{R}_{\geq 0}$ & Frobenius norm \\
$||\cdot||_\pm$ & $\mathbb{R}_{\geq 0}$ & Tensor gauge norm \\
$\lambda$ &$\mathbb{R}_{>0}$& Hyperparameter \\
$n$ &$\mathbb{Z}_{>0}$& Number of observations \\
$\alpha$ & $[0,1]$& Weight parameter \\
$j$ &$\cdot$& Imaginary unit \\
$:$ & $\cdot$ & Slice of matrix or tensor along axis of index \\
$\circ$ & $\cdot$ & Hadamard product \\
$\overline{\cdot}$ & $\cdot$ & Complex conjugate \\
\toprule
\end{tabular}
\end{table}

\subsection{State Estimation and Weighted Least Squares}
We consider the problem of power system state estimation over a discrete time horizon of $T$ periods. Let the power network consist of $M$ nodes (buses), indexed by $m = 1,\dots,M$, and time periods indexed by $t = 1,\dots,T$. At each time $t$, the electrical state of the network is characterized by the complex voltage phasor vector $v^{(t)} \in \mathbb{C}^M,$ whose entries $v_m^{(t)} = |v_m^{(t)}|\exp(i\theta_m^{(t)})$ encode the voltage magnitude and phase angle at each node.

While distribution networks differ from transmission systems in topology and sensing infrastructure, the mathematical structure of the state estimation problem is essentially the same: in both settings one seeks to recover nodal voltages from partial and noisy measurements subject to physical power-flow constraints. The primary challenge in distribution systems arises from reduced observability; conversely, the size of transmission systems demand computationally scalable estimation algorithms.

The goal of state estimation is to recover the sequence $\{v^{(t)}\}_{t=1}^T$ from a collection of heterogeneous measurements. In distribution systems, these measurements are typically obtained from supervisory control and data acquisition (SCADA) devices and may include voltage magnitudes, active power injections, and reactive power injections. 

A classical approach to this problem is weighted least squares (WLS), which estimates the true underlying state by minimizing the discrepancy between observations and nonlinear measurement functions derived from the power-flow equations. Specifically, letting $h(\cdot)$ denote the mapping from the system state to measurements, WLS solves
\[
    \min_{v^{(1)},\dots,v^{(T)}} 
    \sum_{t=1}^T 
    \left\|
        y^{(t)} - h\big(v^{(t)}\big)
    \right\|_{\Sigma^{-1}}^2,
\]
where $y^{(t)}$ collects the measurements at time $t$, $\Sigma$ is a covariance matrix encoding measurement accuracy, and $\|\cdot\|_{\Sigma^{-1}}^2$ denotes a weighted squared norm. This formulation effectively assigns higher influence to more reliable measurements. 

While WLS is well understood and performs well under sufficient measurement redundancy and accurate network models, it becomes unreliable or even infeasible in low-observability regimes where measurements are sparse or missing. To address these challenges, we adopt a data-driven perspective in which the state estimation problem is reformulated as the recovery of a structured tensor from incomplete observations. 

\subsection{Tensor Completion}
\label{subsec:TC}
In data analysis, a tensor is commonly represented as a multidimensional array indexed along multiple modes, providing a natural framework for modeling multiway data. In our context, let $\psi^\star \in \mathbb{R}^{T \times N \times M}$ denote the true (but unknown) state tensor, where the second dimension indexes different physical quantities of interest. Let $\mathcal{Y} \in \mathbb{R}^{T \times N \times M}$ denote the measurement tensor, and let $\mathcal{W} \in \{0,1\}^{T \times N \times M}$ be the observation mask, where $\mathcal{W}_{t,n,m} = 1$ if the $(t,n,m)$ entry is observed and $0$ otherwise. As with WLS, we assume a standard additive noise model: $\mathcal{W} \odot \mathcal{Y} = \mathcal{W} \odot (\psi^\star + \varepsilon),$ where $\varepsilon$ represents measurement noise and $\odot$ denotes the Hadamard product. 

The state estimation problem can be viewed as an inverse problem in which one seeks to estimate $\psi^\star$ from partial observations $\mathcal{W} \odot \mathcal{Y}$ under structural and physical constraints. This is a special type of tensor completion problem, which is typically formulated as a problem of recovering an underlying tensor, given partial measurements and imposing a low-rank assumption: 

\begin{mini}
    {\psi}{\frac{1}{n}||\mathcal{W}\odot(\psi - \mathcal{Y})||^2}{\label{eq:tc}}{}
    \addConstraint{\mbox{rank}(\psi)}{\leq}{k}
\end{mini}

This is a powerful and general-purpose model, but even calculating the rank of a given 3-tensor is NP-hard. We adopt a gauge-norm approach from the literature (described in Section~\ref{subsec:BCG}) to work around this issue, which comes with a practical BCG algorithm with strong theoretical performance guarantees.  Another issue is that, while the standard rank constraint can capture a variety of hidden data patterns, it is inadequate for recovering physically meaningful solutions that satisfy nonlinear flow relationships, as shown in our experiments (see Section~\ref{sec:exp}).  In Section~\ref{sec:prop} we propose some strategies for improving physics modeling while preserving the gauge-norm approach.

\section{Proposed Methodology}
\label{sec:prop}
\noindent
In this section we develop the BCG algorithm along with three novel adaptations in order to better model physics.

\subsection{Measurements}
While the models we will describe are agnostic to the choice of measurements and physics-based penalties, we will first describe our specific choice of measurements for this study as they will be consistent throughout. We describe our choice of physics penalty in Section \ref{sec:penalized}.
Our state tensor is $\psi \in \mathbb{R}^{T \times 5 \times M}$. We arrange the real and imaginary voltage components of the voltage phasors $v$, the voltage magnitude 
$|v|$, the active power $p$, and the reactive power $q$ as follows:
\begin{align*}
    \psi_{(:,1,:)} &:= Re(v) \\
    \psi_{(:,2,:)} &:= Im(v) \\
    \psi_{(:,3,:)} &:= |v| \\
    \psi_{(:,4,:)} &:= p \\
    \psi_{(:,5,:)} &:= q.
\end{align*}
All measurements are per unit (p.u.), meaning that the voltages and powers are divided by a nominal voltage and power, respectively, that is specified within the network description. 

We also define $\psi^{SCADA}\in \mathbb{R}^{T \times 3 \times M}$ as the subtensor of $\psi$ whose elements are the measurements acquired through SCADA devices. Specifically,
\begin{align*}
    \psi^{SCADA}_{(:,1,:)} &:= |v| \\
    \psi^{SCADA}_{(:,2,:)} &:= p \\
    \psi^{SCADA}_{(:,3,:)} &:= q.
\end{align*}

\subsection{BCG}
\label{subsec:BCG}
In \cite{chen2025tensor}, they replace the rank constraint with gauge rank, denoted $||\cdot ||_\pm$ . Let $S_\lambda$ be the set of rank-1 tensors with entries equal to -1 or 1. Let $C_\lambda = conv(S_\lambda)$. In \cite{chen2025tensor} it is shown that $S_\lambda = \lambda S_1$ and $C_\lambda = \lambda C_1$. Then for a tensor $\psi$, we define the gauge norm as 
\begin{align}
    ||\psi||_\pm := \inf\{\lambda \geq 0 : \psi\in\lambda C_1\}
\end{align}
It is established in \cite{chen2025tensor} that $||\psi||_{\text{max}}\leq||\psi||_\pm\leq\text{rank}(\psi)\cdot||\psi||_{\text{max}}$ under a regularity condition. 

Given a tensor $\mathcal{Y}$ of $n$ observations and a tensor mask $\mathcal{W}$, where each entry is 0 if that measurement is not observed and 1 otherwise, we then define the tensor completion problem as
\begin{mini}
    {\psi}{\frac{1}{n}||\mathcal{W}\odot(\psi - \mathcal{Y})||^2}{\label{eq:tcvip}}{}
    \addConstraint{||\psi||_\pm}{\leq}{\lambda}
\end{mini}
This model achieves the information-theoretic error rate for tensor completion. Though this model (\ref{eq:tcvip}) is a convex optimization problem, it is in fact NP-hard \citep{chen2025tensor}. Hence, it is not \emph{a priori} obvious how to solve this optimization problem. One of the innovations by \citep{chen2025tensor} is the design of a numerical algorithm that is guaranteed to return a global optimal solution in a polynomial number of calls to a weak-separation oracle. 

By construction, the constraint $||\psi||_\pm\leq\lambda$ is equivalent to  $\psi \in C_\lambda$. Given the NP-hardness of the tensor completion problem (\ref{eq:tcvip}), it is not possible to represent this constraint using a polynomial number of linear inequalities or convex semidefinite constraints (unless P = NP). Instead, one of the contributions of \citep{chen2025tensor} is the construction of a representation of the \emph{vertices} of $C_\lambda$ (which is the same as the set $S_\lambda$) using a polynomial number of integer-linear constraints. This construction allows the solution of separation problems over the polytope $C_\lambda$ by solving mixed-integer linear programs (MILPs), since there is always an optimal solution of a linear program over a compact polytope at one of the vertices of the polytope. The algorithm for tensor completion uses the Blended Conditional Gradients algorithm for convex optimization \citep{braun2019blended},
in a polynomial number of calls to a weak-separation oracle. Because only a weak-separation oracle is needed, the numerical algorithm for tensor completion can be sped up in one of two ways. Much of the time, solution of the MILP can be replaced with an alternating maximization heuristic that if successful provides weak separation, otherwise if the heuristic is unsuccessful then the MILP is solved. Additionally, the MILP does not need to be solved to optimality and can often be early-terminated if a sufficiently-good solution is found.  So though in the worst case, this weak-separation oracle involves solving an integer program which is also NP-Hard, use of the alternating maximization heuristic and early termination significantly reduces how many full integer program solutions are required. For more information and the complete algorithm, we refer the reader to \cite{chen2025tensor}.

\subsection{BCG into WLS}
One of the challenges of applying any tensor completion model to the state estimation problem is that one of the desired quantities, voltage angle, is not typically measured with SCADA. Therefore, entire slices of the tensor go completely unobserved and thus are difficult to complete with any accuracy. The most straightforward way to remedy this is to apply the tensor completion, which in our case is \emph{BCG}, to $\mathcal{Y}^{SCADA}$ and use the completed $\psi^{SCADA}$ measurements as input into WLS. We refer to this approach as \emph{BCG into WLS}. This two-step process may introduce compounding errors between the models and does not leverage any time dependency in the physics-based portion of the model.

\subsection{Penalized BCG}
\label{sec:penalized}
Here we propose an integrated, physics-informed approach in contrast to \emph{BCG into WLS}: we introduce a physics-informed tensor completion model by adding a physics-based penalty term to the objective of the tensor completion model. Our model is 
\begin{mini}
    {\psi}{\alpha R(\psi) + (1-\alpha)P(\psi)}{}{}
    \addConstraint{||\psi||_\pm}{\leq \lambda}
\end{mini}
where $R(\psi) := \frac{1}{n}||\mathcal{W}\odot(\psi - \mathcal{Y})||_F^2$ is the tensor residual penalty and $P(\psi)$ is a convex physics-based penalty. Note that convexity is key to maintaining favorable BCG convergence guarantees. We also introduce the weight factor $\alpha \in [0,1]$ to balance the focus of the model between the residuals and physics. When $\alpha$ is chosen \emph{a priori} and remains fixed, we refer to this as the \emph{Static Penalty} model. 

For the particular physics penalty we have chosen, we take the admittance matrix $Y_{bus}\in \mathbb{C}^{(M+1) \times (M+1)}$ and separate it into submatrices between the slack and nonslack nodes:
\begin{equation*}
    Y_{bus} = 
    \begin{bmatrix}
        Y_{00} \in \mathbb{C}^{1 \times 1} & Y_{0L} \in \mathbb{C}^{1 \times M}\\
        Y_{L0} \in \mathbb{C}^{M \times 1} & Y_{LL} \in \mathbb{C}^{M \times M}
    \end{bmatrix}.
\end{equation*}
Let $v_0$ be the voltage at the slack node. Then $w := -Y_{LL}^{-1}Y_{L0}v_0$ is the no-load voltage of the network, and we define the following coefficient matrices:
\begin{align*}
    B_1&:=Re\left(Y_{LL}^{-1}\text{diag}(\overline{w})^{-1}\right) \\
    B_2&:=Re\left(-jY_{LL}^{-1}\text{diag}(\overline{w})^{-1}\right) \\
    B_3&:=Im\left(Y_{LL}^{-1}\text{diag}(\overline{w})^{-1}\right) \\
    B_4&:=Im\left(-jY_{LL}^{-1}\text{diag}(\overline{w})^{-1}\right) \\
    C_1&:=\text{diag}\left(|w| \right)^{-1}Re\left(\text{diag}(\overline{w})Y_{LL}^{-1}\text{diag}(\overline{w})^{-1}\right) \\
    C_2&:=\text{diag}\left(|w| \right)^{-1}Re\left(-j\text{diag}(\overline{w})Y_{LL}^{-1}\text{diag}(\overline{w})^{-1}\right),  
\end{align*}
where $j$ is the imaginary unit and $\overline{\cdot}$ denotes the complex conjugate. Finally, we define the following linear power flow penalties at time $t$:
\begin{align*}
    \phi_{(1,t)}&:= B_1 \psi_{(t,4,:)}+B_2\psi_{(t,5,:)}-\psi_{(t,1,:)}+Re(w) \\
    \phi_{(2,t)}&:= B_3 \psi_{(t,4,:)}+B_4\psi_{(t,5,:)}-\psi_{(t,2,:)}+Im(w) \\
    \phi_{(3,t)}&:= C_1 \psi_{(t,4,:)}+C_2\psi_{(t,5,:)}-\psi_{(t,3,:)}+|w|.
\end{align*}

These functions relate the active and reactive powers to the different aspects of the voltage phasor: $\phi_{(1,:)}$ enforces consistency with real voltage, $\phi_{(2,:)}$ with imaginary voltage, and $\phi_{(3,:)}$ with voltage magnitude. They are based on a single iteration of a fixed point power flow calculation as done in \citep{bernstein2018load} and \citep{zhu2024accelerated}. After taking the mean of the Frobenius norm of each function, our physics-based penalty is 
\begin{equation*}
    P(\psi):=\frac{1}{3MT}\sum_{t=1}^T\sum_{i=1}^3||\phi_{(i,t)}||_F^2,
\end{equation*}
which results in the following physics-informed convex tensor completion problem:
\begin{mini}
    {\psi}{\frac{\alpha}{n}||\mathcal{W}\circ(\psi - \mathcal{Y})||_F^2+\frac{1-\alpha}{3MT}\sum_{t=1}^T\sum_{i=1}^3||\phi_{(i,t)}||_F^2}{}{}
    \addConstraint{||\psi||_\pm}{\leq \lambda}
    \label{eq:phystc}
\end{mini}
We note that each linearized residual $\phi$ is affine in the tensor variables, and so the squared residual penalty $ P(\psi)$ is convex.

\subsection{Dynamically Penalized BCG}
In the previous subsection we treated $\alpha$ as an exogenous parameter, but it is unclear how to determine a good setting \emph{a priori}. Thus we propose a \emph{Dynamic Penalty} approach, described in Algorithm \ref{alg:pitc}.  The update rule is $\alpha_{t+1} \gets \frac{R(\psi_{t+1})}{R(\psi_{t+1}) + P(\psi_{t+1})}$, with the aim of weighing the penalties in proportion to their errors in order to increase weight on the dominant source of error. We observe in our computational experiments (see Section~\ref{subsec:resdisc}) that $\alpha$ stabilizes at an effective value within the first quarter of BCG iterations.

\begin{algorithm}[h]
    \caption{Physics-Informed Tensor  with Dynamic $\alpha$} \label{alg:pitc}
    \begin{algorithmic}[1]
        \Require Tensor residual error function $R$, convex power flow error function $P$
        \State $\alpha_0 \gets 0.5$
        \State $f_0 \gets \alpha_0 R + (1-\alpha_0)P$
        \For{$t=0,\ldots ,T-1$}
            \State $\psi_{t+1}, \text{oracle} \gets \text{BCGstep}(f_t,\psi_t)$ \Comment{BCGstep is one iteration of BCG}
            \If{$\text{oracle} = \text{LPsep}$} \Comment{oracle will either be SiDO or LPsep}
                \State $\alpha_{t+1} \gets \frac{R(\psi_{t+1})}{R(\psi_{t+1}) + P(\psi_{t+1})}$
            \Else
                \State $\alpha_{t+1} \gets \alpha_t$
            \EndIf 
            \State $f_{t+1} \gets \alpha_{t+1} R + (1-\alpha_{t+1})P$
            \State $t \gets t+1$
        \EndFor
    \end{algorithmic}
\end{algorithm}

\section{Theoretical Analysis}
Here we provide an error bound for the proposed physics-informed tensor-completion estimator.

Let $\psi^\star \in \mathbb{R}^{T \times N \times M}$ denote the true
state tensor. We assume that
$
     \mathcal{W} \odot \mathcal{Y} = \mathcal{W} \odot (\psi^\star + \varepsilon),
$
 where $\varepsilon$ denotes measurement noise. 

Now, let us stack physics-based residuals in a vector  \[
    \Phi(\psi)
    :=
    \begin{bmatrix}
        \phi_{(1,1)}(\psi) \\
        \phi_{(1,2)}(\psi) \\
        \phi_{(1,3)}(\psi) \\
        \vdots \\        
        \phi_{(3,T)}(\psi)
    \end{bmatrix}
    \in \mathbb{R}^{3MT}.
\]

We shall allow for model mismatch by defining
$\eta_\Phi := \Phi(\psi^\star)$, which captures linearization error and other deviations between the true nonlinear power-flow physics and the linearized residual model. 

For notational convenience we will combine the errors as follows:

$$
    \mathcal{A}_\alpha(\psi)
    :=
    \begin{bmatrix}
        \mbox{vec}(\sqrt{\alpha/n}\, W \odot \psi) \\
        \sqrt{(1-\alpha)/3MT} \Phi(\psi)
    \end{bmatrix},
$$
where $\mbox{vec}$ denotes vectorization: stacking the elements in a vector (with order left unspecified WLOG). Thus the objective in (\ref{eq:phystc}) can be written as
$$
    \|\mathcal{A}_\alpha(\psi) - b\|_F^2,
$$
where
$$
    b
    :=
    \begin{bmatrix}
        \mbox{vec}(\sqrt{\alpha/n}\, \mathcal{W} \odot \mathcal{Y} )\\
        0
    \end{bmatrix}.
$$

Now, for a given $\lambda$ we define the feasible perturbation set around the true tensor $\psi^*$ as
$$
    \mathcal{D}(\psi^*,\lambda)
    :=
    \left\{
        \Delta : ||\psi^* + \Delta||_{\pm} \leq \lambda        
    \right\}.
$$

\begin{definition}
\(\mathcal{A}_\alpha\) satisfies \emph{restricted stability}
over \(\mathcal{D}(\psi^\star,\lambda)\) if there exists a constant
\(\kappa_\alpha > 0\) such that
$$
    \left\|
        \mathcal{A}_\alpha(\psi^\star+\Delta)
        -
        \mathcal{A}_\alpha(\psi^\star)
    \right\|_2
    \geq
    \kappa_\alpha \|\Delta\|_F
    \ \ 
    \forall \Delta \in \mathcal{D}(\psi^\star,\lambda).
   $$ 
\end{definition}

\begin{proposition}
Let $\widehat \psi$ be an optimal solution to (\ref{eq:phystc}). Suppose that $ \|\psi^\star\|_{\pm} \leq \lambda$, and moreover assume the restricted stability condition holds.  Then
\begin{align*}
    \|\widehat{\psi} - \psi^\star\|_F 
     \leq 
    \frac{2}{\kappa_\alpha}
    \left[
        \frac{\alpha}{n}
        \|\mathcal{W} \odot \varepsilon\|_F^2 
         +
        \frac{1-\alpha}{3MT}
        \|\eta_\Phi\|_F^2
    \right]^{1/2}.
\end{align*}
\end{proposition}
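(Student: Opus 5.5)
The proof follows the standard basic-inequality argument for constrained least squares. Set $\Delta := \widehat\psi - \psi^\star$. Since $\widehat\psi$ is feasible, $\|\psi^\star+\Delta\|_\pm \le \lambda$, so $\Delta \in \mathcal{D}(\psi^\star,\lambda)$ and the restricted stability condition applies to $\Delta$. Since $\|\psi^\star\|_\pm\le\lambda$, the true tensor $\psi^\star$ is also feasible for (\ref{eq:phystc}). Optimality of $\widehat\psi$ then gives the basic inequality
\[
\|\mathcal{A}_\alpha(\widehat\psi)-b\|_2 \le \|\mathcal{A}_\alpha(\psi^\star)-b\|_2 .
\]

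First we identify the residual at the truth. Under the noise model, $\mathcal{W}\odot(\psi^\star-\mathcal{Y}) = -\mathcal{W}\odot\varepsilon$, and by definition $\Phi(\psi^\star)=\eta_\Phi$. Hence
\[
e:=\mathcal{A}_\alpha(\psi^\star)-b=\begin{bmatrix}-\mathrm{vec}(\sqrt{\alpha/n}\,\mathcal{W}\odot\varepsilon)\\ \sqrt{(1-\alpha)/3MT}\,\eta_\Phi\end{bmatrix},
\]
so that $\|e\|_2^2 = \frac{\alpha}{n}\|\mathcal{W}\odot\varepsilon\|_F^2+\frac{1-\alpha}{3MT}\|\eta_\Phi\|_F^2$. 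This is exactly the bracketed quantity in the statement. The identity uses $\mathcal{W}\odot\mathcal{W}=\mathcal{W}$, since the mask is binary.

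Next we use the triangle inequality. Write $\mathcal{A}_\alpha(\widehat\psi)-\mathcal{A}_\alpha(\psi^\star) = (\mathcal{A}_\alpha(\widehat\psi)-b) - e$. Combining this with the basic inequality,
\[
\|\mathcal{A}_\alpha(\widehat\psi)-\mathcal{A}_\alpha(\psi^\star)\|_2 \le \|\mathcal{A}_\alpha(\widehat\psi)-b\|_2+\|e\|_2 \le 2\|e\|_2 .
\]
Restricted stability bounds the left side below by $\kappa_\alpha\|\Delta\|_F$. Dividing by $\kappa_\alpha>0$ gives the claim. Note that $\mathcal{A}_\alpha$ is affine because of the constant offsets in $\phi$; the argument only uses differences $\mathcal{A}_\alpha(\psi^\star+\Delta)-\mathcal{A}_\alpha(\psi^\star)$, which is exactly how the stability condition is phrased, so the offsets cause no issue.

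There is no genuinely hard step here. The only points needing care are the following.
\begin{itemize}
\item The optimization is over a nonconvex-looking but in fact convex gauge ball. We do not need convexity, only feasibility of both $\widehat\psi$ and $\psi^\star$.
\item The objective in (\ref{eq:phystc}) is the squared norm, and minimizing the square is equivalent to minimizing the norm. This justifies the basic inequality in unsquared form.
\item The sign and scaling bookkeeping must be done correctly when identifying $\|e\|_2^2$ with the bracketed expression.
\end{itemize}
One could sharpen the constant from $2$ by expanding squares and using a cross-term bound. We will not pursue that, since the stated factor $2$ follows directly from the triangle-inequality route.
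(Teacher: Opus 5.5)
Your proof is correct and follows essentially the same argument as the paper: feasibility of $\psi^\star$ gives the basic inequality, the triangle inequality yields $\|\mathcal{A}_\alpha(\widehat\psi)-\mathcal{A}_\alpha(\psi^\star)\|_2 \le 2\|\mathcal{A}_\alpha(\psi^\star)-b\|_2$, and the residual at the truth is identified with the bracketed term before invoking restricted stability. One small correction to your closing aside: expanding squares in the basic inequality and bounding the cross term by Cauchy--Schwarz still gives only the factor $2$. Reducing it to $1$ needs the first-order optimality (variational inequality) condition, which uses convexity of the gauge ball; your main argument correctly does not need this.
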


\begin{proof}{Proof.}
Let $\Delta := \widehat{\psi} - \psi^\star$, and so $\Delta \in \mathcal{D}(\psi^*,\lambda)$. Moreover, observe that

\begin{align*}
\mathcal A_\alpha(\widehat{\psi})-\mathcal A_\alpha(\psi^\star)
=&
\left(\mathcal A_\alpha(\widehat{\psi})-b\right)
-
\left(\mathcal A_\alpha(\psi^\star)-b\right),
\end{align*}

and so by triangle inequality,
\begin{align}
||\mathcal A_\alpha(\widehat{\psi})-\mathcal A_\alpha(\psi^\star)
||_2
\leq&
||
\mathcal A_\alpha(\widehat{\psi})-b
||_2
 +
\|
\mathcal A_\alpha(\psi^\star)-b
\|_2.
    \label{eq:trineqbd}
\end{align}

Since $\widehat{\psi}$ minimizes the objective over the feasible set and $\psi^\star$ is also feasible,
$$
    \|\mathcal{A}_\alpha(\widehat{\psi}) - b\|_2^2
    \leq
    \|\mathcal{A}_\alpha(\psi^\star) - b\|_2^2.
$$

Together with~(\ref{eq:trineqbd}) we have
$$
   ||\mathcal A_\alpha(\widehat{\psi})-\mathcal A_\alpha(\psi^\star)
||_2
    \leq
    2
    \|\mathcal{A}_\alpha(\psi^\star) - b\|_2.
$$

It remains to compute the residual of the true tensor $\psi^*$. By definition,
$$
    \mathcal{A}_\alpha(\psi^\star) - b
    =
    \begin{bmatrix}
        \sqrt{\alpha/n}\, \mbox{vec}(\mathcal{W} \odot (\psi^\star - \mathcal{Y})) \\
        \sqrt{(1-\alpha)/3MT}\, \Phi(\psi^\star)
    \end{bmatrix}.
$$
Using
$$
    \mathcal{Y} = \mathcal{W} \odot (\psi^\star + \varepsilon)
$$
on the observed entries, we have
$$
    \mathcal{W} \odot (\psi^\star - \mathcal{Y})
    =
    - \mathcal{W} \odot \varepsilon,
$$
and along with the definition of $\eta_\Phi$ we have
$$
    \|\mathcal{A}_\alpha(\psi^\star) - b\|_2^2
    =
    \frac{\alpha}{n}
    \|\mathcal{W} \odot \varepsilon\|_F^2
    +
    \frac{1-\alpha}{3MT}
    \|\eta_\Phi\|_F^2. 
$$

Applying this result to  the restricted stability condition 
yields

$$
    \kappa_\alpha \|\Delta\|_F
    \leq
    2
    \left[
        \frac{\alpha}{n}
        \|\mathcal{W} \odot \varepsilon\|_F^2
        +
        \frac{1-\alpha}{3MT}
        \|\eta_\Phi\|_F^2
    \right]^{1/2}.
$$
\end{proof}

\subsection{Discussion}
\label{subsec:disc}
Intuitively, we may interpret the bound as: if the available measurements and linearized power-flow equations are sufficiently informative over our tensor model, then small sensor noise and small physics-model error imply a small state-estimation error. 

In particular, the restricted stability condition is an observability requirement that no low-norm perturbation can be simultaneously hidden from the observed measurement mask as well as from the linearized power-flow residuals. This condition is analogous to the conditional stability assumptions used in physics-informed inverse problems, where stability of the problem converts small data and physics residuals into reconstruction-error bounds \citep{mishra2022estimates}. 

In radial distribution systems, AC power-flow structure often gives a well-conditioned, nearly one-to-one relationship between injections, branch flows, and voltages over normal operating regimes; in such cases the condition is a reasonable assumption. The condition may fail if the measurement mask is too sparse, if the linearized physics model has a large nullspace over the feasible set, or if $\lambda$ is chosen so large that the feasible perturbation set contains essentially unrestricted directions. 

 We note also that the convergence guarantees in this section apply only to the Static Penalty setting: the dynamic update is an adaptive heuristic used to select an effective penalty weight during BCG iterations. However, provided the dynamic penalty stabilizes (as observed in our experiments), the method behaves similarly to the static penalty, hence the bound can still be used for guidance.

\section{Computational Experiments}
\label{sec:exp}
Here we present our experimental results on three test cases. 

\subsection{Experimental Setup}
Experiments are conducted on three different network topologies over a one day period. Each node is equipped with a SCADA device that provides noisy measurements of voltage magnitude and active and reactive power injection. We refer to the proportion of possible SCADA measurements taken (across nodal locations and time) as the Fraction of Available Data, or FAD. In our experiments, FAD ranges between 10\% to 100\%. Note that even at 100\% FAD, the observation tensor $\mathcal{Y}$ would still only contain 60\% of the possible entries as SCADA devices do not measure complex voltages. 

The networks are based on standard IEEE test cases (9-node, 14-node, and 33-node) with data drawn from the \texttt{pandapower} 3.4.0 Python library \citep{pandapower.2018}. For each system, we simulate load data using a noisy sinusoidal model across five days at fifteen minute intervals. For each day, the minimum and maximum loads are set to 50\% and 150\% respectively of the default load values for the network within \texttt{pandapower}. At each fifteen minute interval, 1\% Gaussian measurement noise is also applied.  We randomly select four days as our training set for the $\lambda$ hyperparameter, denoting the complete tensor of all training data as $\psi_{train}$. This is done via grid search on $\lambda \in [1.0,2.0]\cdot \max(|\psi_{train}|)$ with 100\% FAD.  For experiments with reduced FAD, we uniformly sampled measurements across all nodes and time periods then applied 1\% Gaussian measurement noise to each sample. The WLS benchmark was performed using the built-in state estimation tools in \texttt{pandapower} while our tensor completion methods were written in Python 3 with Gurobi \citep{gurobi} as the integer programming solver for the BCG separation oracle. All experiments were run on a Windows laptop with 16GB of RAM and an Intel i7-12800H CPU. 

To assess the performance of each method, we calculate the mean absolute percent error (MAPE) of the estimated voltage magnitude, the mean absolute angle (MAE) of the estimated voltage angle, and the normalized mean squared error (NMSE) of the overall tensor across all nodes and time periods. We compare the performance of our model with decreasing FAD levels and compare to the WLS solution with 100\% FAD. Note that even with 90\% FAD, WLS could not always recover unobserved missing data. 

\subsection{Results and Discussion}
\label{subsec:resdisc}
The results can be seen in Figures \ref{fig-mag}, \ref{fig-ang}, and \ref{fig-nmse}. The x-axis represents FAD, with left-most representing only 10\% of nodes across time available (i.e. 90\% of node-time measurements removed with uniform probability), and right-most representing all SCADA data available; y-axes represent various types of normalized errors discussed in the setup. Since WLS failed in at least one time period in cases with less than 100\% SCADA, we provide a light dashed line for comparison to other methods that have less data available.

\begin{figure}
    \centering
    \includegraphics[width=\textwidth]{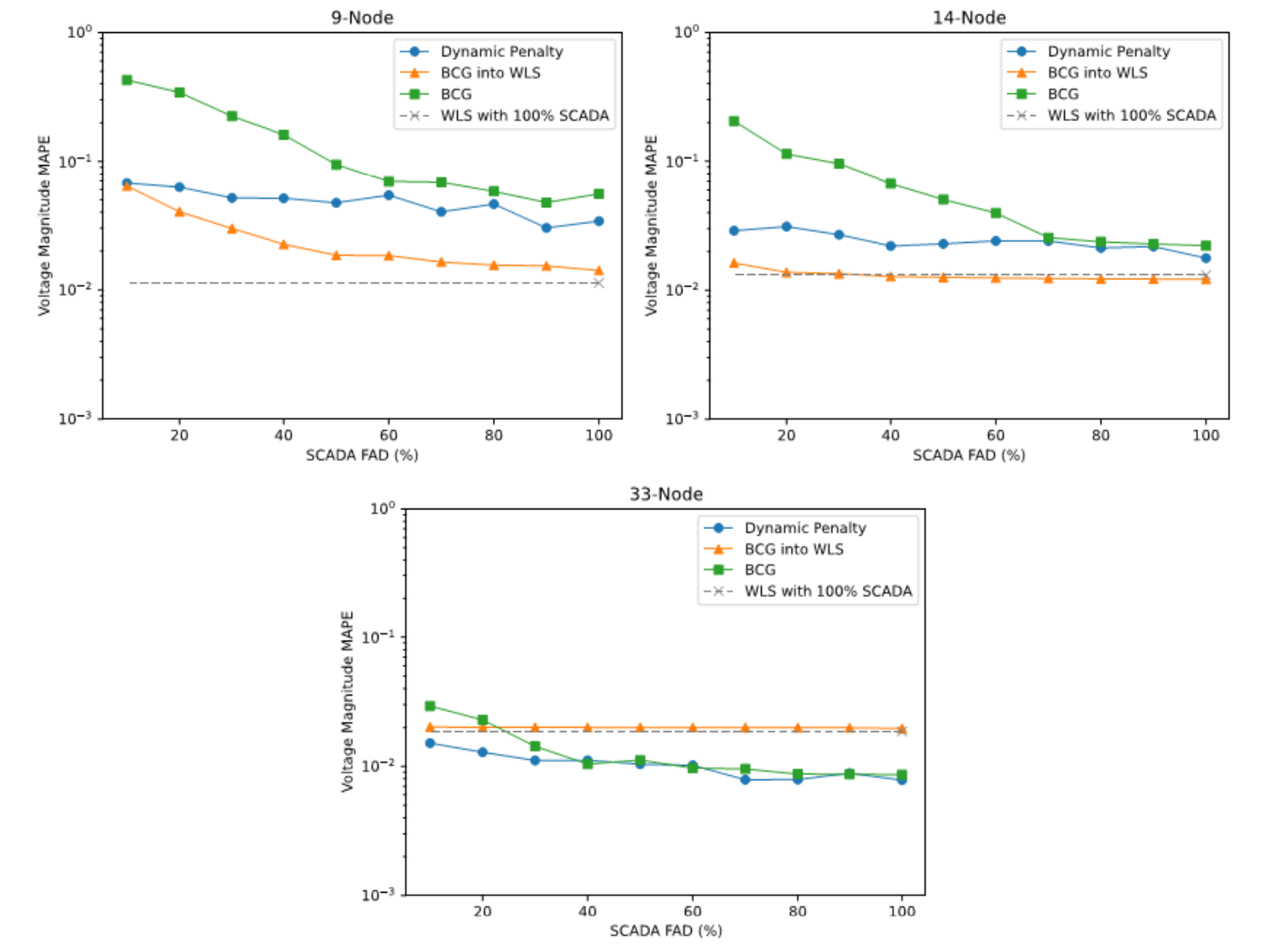}
    \caption{Voltage Magnitude Experimental Results}
    \label{fig-mag}
\end{figure}

\begin{figure}
    \includegraphics[width=\textwidth]{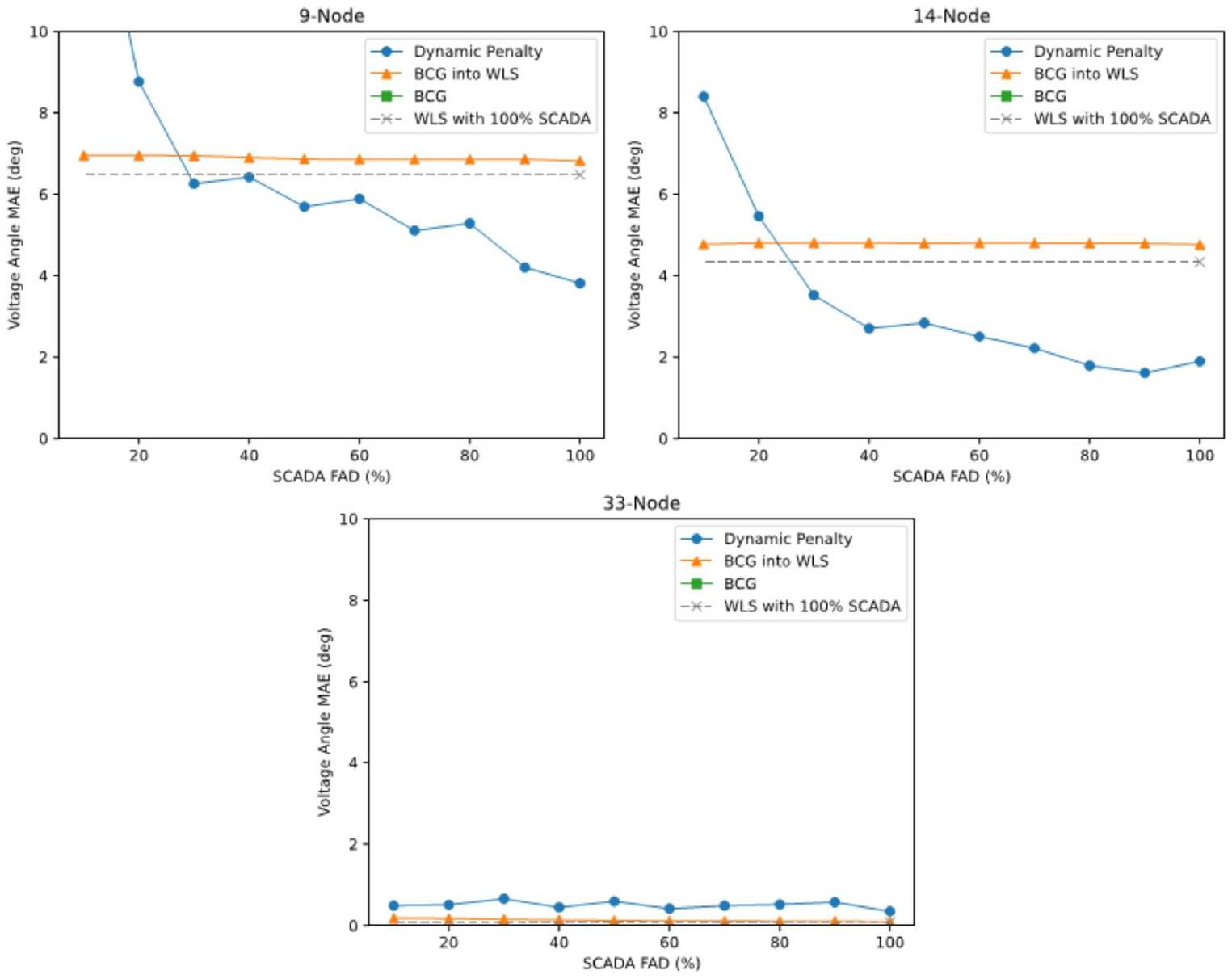}
    \caption{Voltage Angle Experimental Results. BCG had errors above $45^\circ$ in all cases and thus is not visible on the graphs.} 
    \label{fig-ang}
\end{figure}

\begin{figure}
    \includegraphics[width=\textwidth]{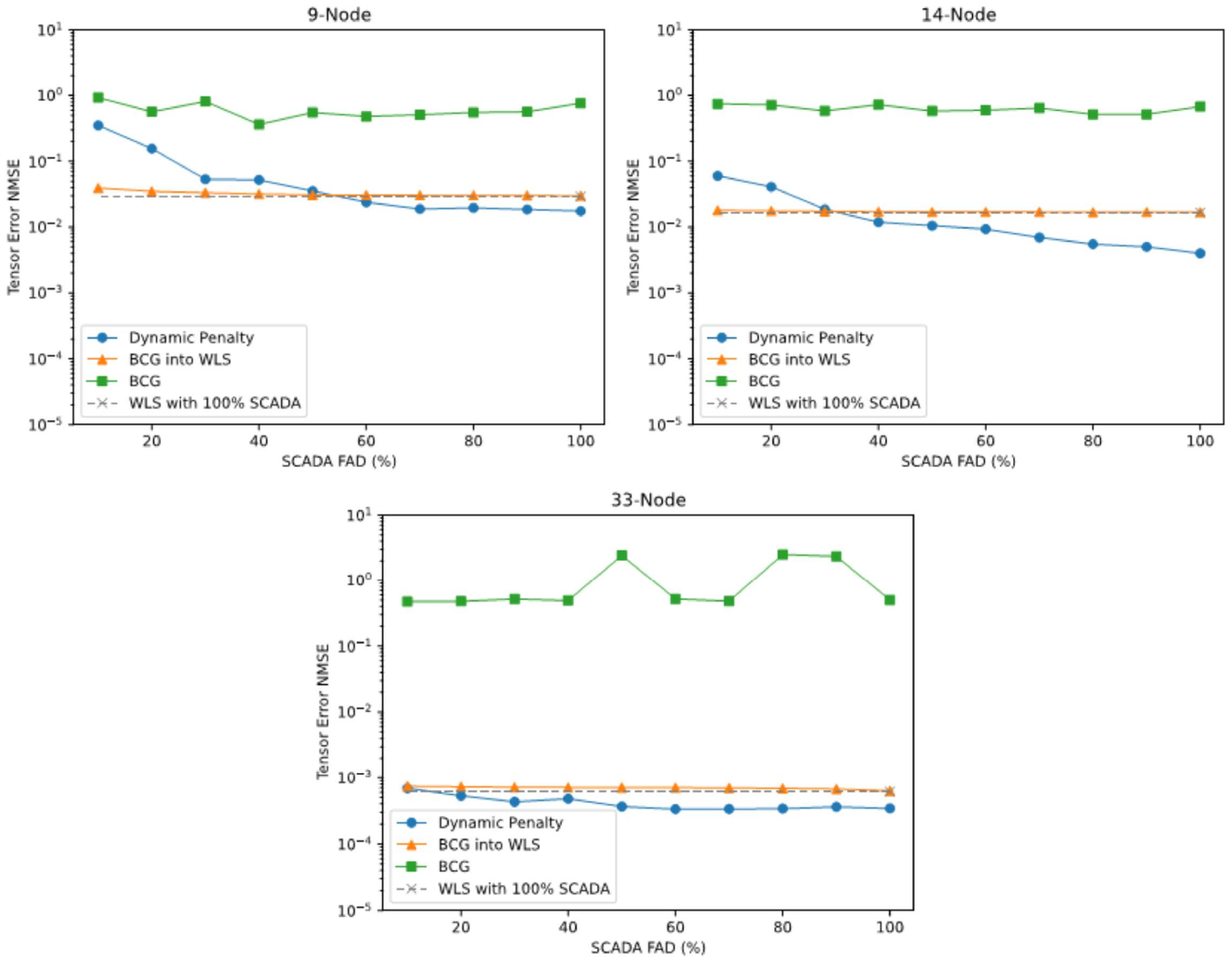}
    \caption{Tensor Error Experimental Results} 
    \label{fig-nmse}
\end{figure}

Unlike WLS, the standard (unpenalized) \emph{BCG} model can provide solutions in all cases. Our BCG-based variants, however, demonstrate the advantage of physics-based approaches. The results are not surprising: we expect \emph{BCG} to struggle as two entire slices of the tensor, the real and imaginary voltages, are unobserved. This is especially visible in the voltage angle graphs, because angle is calculated from those slices and the errors of the \emph{BCG} are so far off they are not visible on the chosen scale. The best performance of \emph{BCG} was with regards to the voltage magnitude of the 33-node network, nearly matching the best-performing dynamically chosen penalty. This is because the true state of the 33-node system contains very small voltage angles of less than half a degree, thus the real voltage is near the voltage magnitude and the imaginary voltage is near 0; therefore, the observations contained within the measurement tensor are able to well approximate the state of the system on their own. However, the voltage angle and tensor errors are still large as the real and imaginary voltages are the unobserved slices and left uncorrelated to the rest of the tensor.
In the 9- and 14- node networks we can see angles up to $25^\circ$, making the weakness of the model more pronounced. 

\emph{BCG into WLS} performs robustly across all instances. Indeed, given as little as 10\% availability in the 14- and 33-node networks and 50\% in the 9-Node network,  performance is remarkably close to standalone WLS with 100\% of data.

Dynamic Penalty has best performance up to some high level of data loss with respect to overall tensor error in all cases. Indeed, in terms of tensor error, our physics-informed approach is able to produce better estimates with 50\% of data than WLS with 100\% of data. The physics-informed tensor approach captures both intertemporal dynamics as well as power flow physics in a way that can result in better estimates.

We note a tradeoff between voltage magnitude and voltage angle errors: Dynamic Penalty has the lowest angle error in the 9- and 14- node networks even with 30\% SCADA FAD, but lags in voltage magnitude compared  to \emph{BCG into WLS}. On the 33-node network, it is reversed, with lower magnitude errors even with 10\% of data but greater angle errors. We hypothesize this correlates to the aforementioned size of the voltage angles within the system, as the 9- and 14-node networks have greater spread of angles overall with the 33-node having much smaller. Indeed, all methods demonstrate very low angle errors on the 33-node network. It should also be noted that increasing the available data does not appear to monotonically decrease the errors. We hypothesize this is affected by the distribution of the available measurements captured within the sample, as the magnitudes of the measurements and their locations within the penalties lead to greater implicit weight assigned to them. 

\subsubsection{Dynamic vs Static Penalties}
\begin{figure}
    \includegraphics[width=\textwidth]{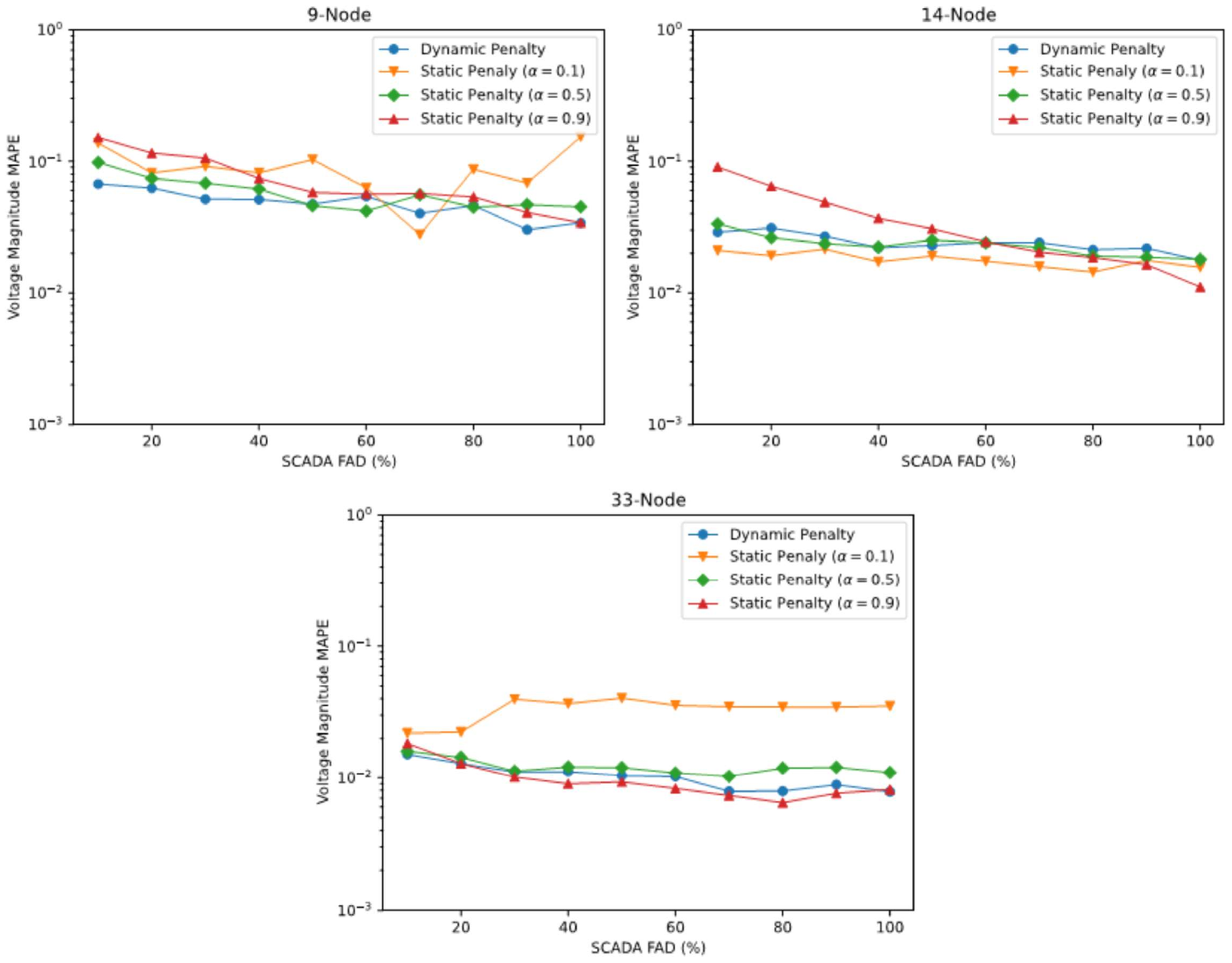}
    \caption{Voltage magnitude of static vs dynamic penalty} 
    \label{fig-static-mag}
\end{figure}

\begin{figure}
    \includegraphics[width=\textwidth]{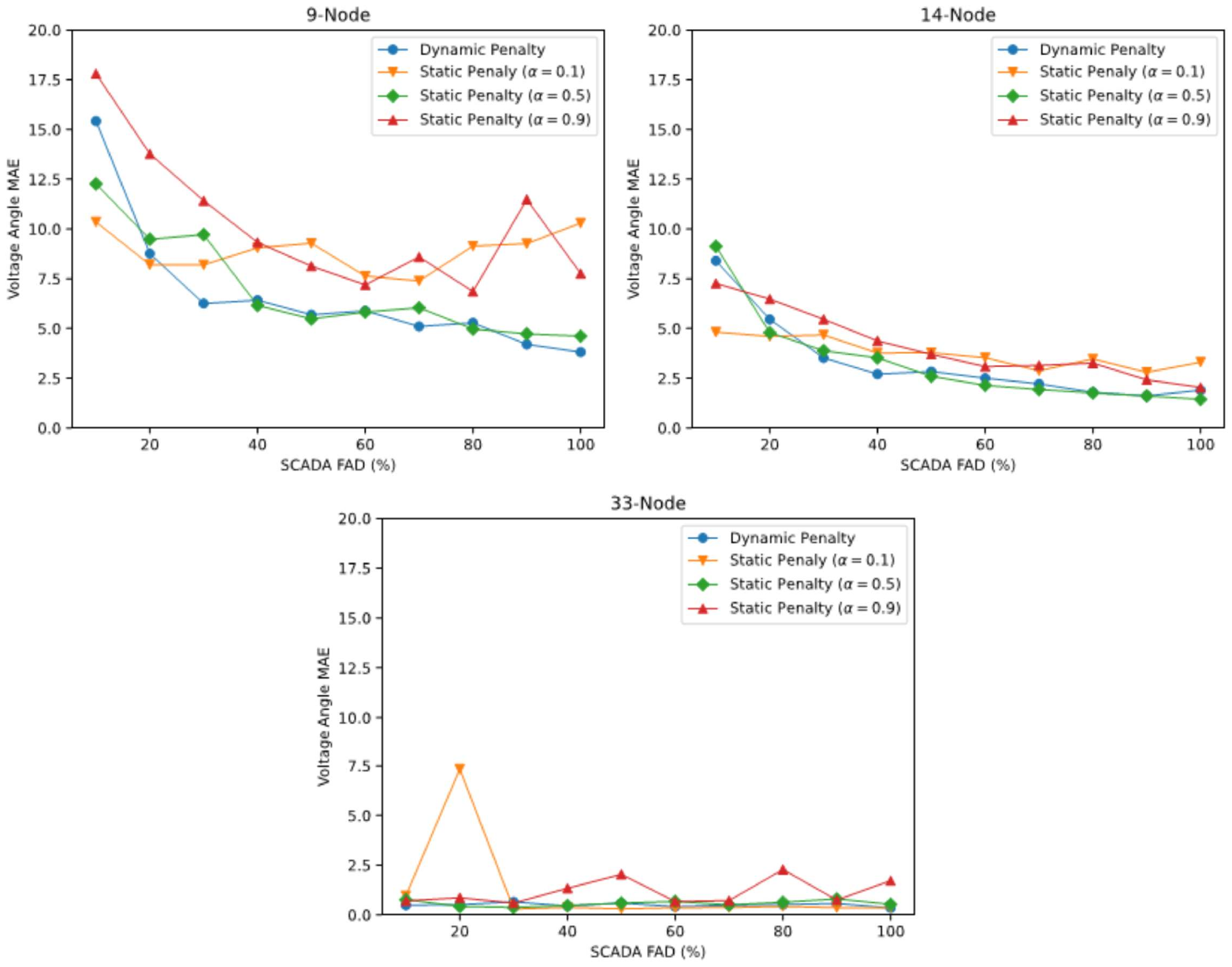}
    \caption{Voltage angle of static vs dynamic penalty} 
    \label{fig-static-ang}
\end{figure}

\begin{figure}
    \includegraphics[width=\textwidth]{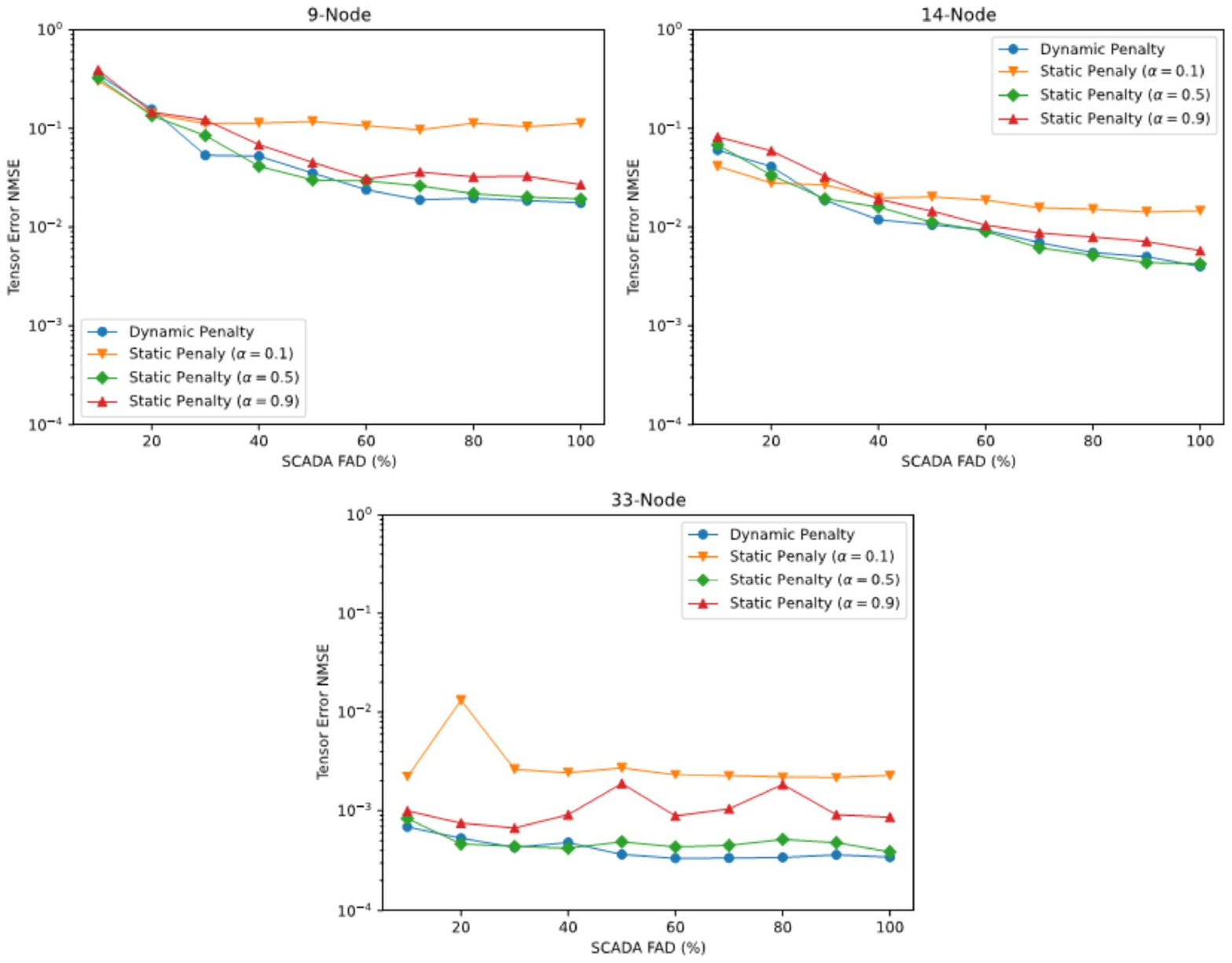}
    \caption{Tensor error of static vs dynamic penalty} 
    \label{fig-static-nmse}
\end{figure}

\begin{figure}
    \includegraphics[width=\textwidth]{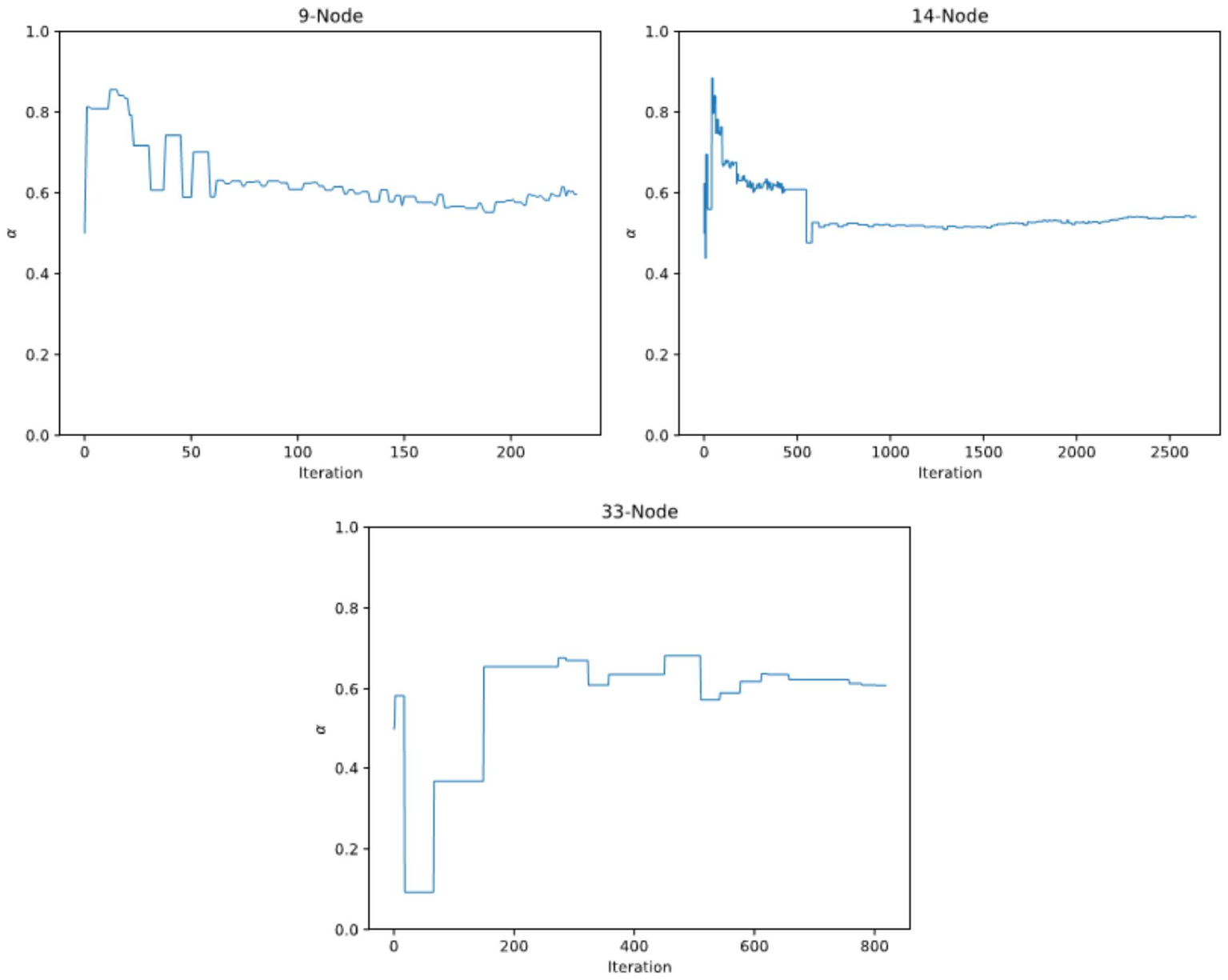}
    \caption{Dynamic Evolution of $\alpha$ at 50\% SCADA FAD.} 
    \label{fig-alpha}
\end{figure}

To assess the effect of the $\alpha$ parameter and of our dynamic penalty scheme, we ran the same experiments with static $\alpha$ values of $0.1,0.5,$ and $0.9$ and report the MAPE of voltage magnitude errors in Figure \ref{fig-static-mag}, the MAE of the voltage angle errors in \ref{fig-static-ang}, and the NMSE of the overall tensor error in Figure \ref{fig-static-nmse}. We see that the choice of $\alpha$ in the static setting can substantially impact estimation error, with $\alpha=0.5$ performing the best with regards to tensor error followed by $\alpha = 0.9$ and $\alpha = 0.1$. In fact, $\alpha = 0.5$ performs very close to the dynamically-derived penalty, as the dynamic penalty in our cases equilibriated between $0.5$ and $0.6$. The evolution of the dynamic $\alpha$ in the 50\% SCADA FAD case is shown in Figure \ref{fig-alpha}. While $\alpha = 0.5$ may have been a fortuitous guess of a reasonable $\alpha$ in our particular tests, we do not have reason to believe this would hold in all settings and networks. For the voltage magnitude, the performance of $\alpha = 0.1$ and $\alpha = 0.9$ is much more sporadic and unpredictable, with $\alpha = 0.9$ performing best in the 33-node network, $\alpha = 0.1$ best in the 14-node, and both generally lagging in the 9-node network. We see again that the dynamic penalty and $\alpha = 0.5$ perform best in the voltage angle errors, and the relative performance of $\alpha=0.1$ and $\alpha =0.9$ varies at different FAD levels. Since $\alpha = 0.1$ applies more weight to the physics penalty and $\alpha =0.9$ applies more to the residuals, this demonstrates that an appropriate $\alpha$ choice is vital for both improved and consistent results. 

The graphs in Figure \ref{fig-alpha} also demonstrate another correlation in the performance of the penalized models. As previously stated, the \emph{Dynamic Penalty} had the lowest voltage angle errors in the 9 and 14-node networks for 30\% SCADA FAD and above. In Figure \ref{fig-alpha}, we see that $\alpha$ initially increases, meaning the residuals have a greater error at the beginning of the algorithm. Meanwhile, $\alpha$ initially decreased in the 33-node network where the \emph{Dynamic Penalty} performed best in voltage magnitude, thus the power flow penalties had a greater initial error. 

\begin{table}
\centering
\caption{Shifted geometric means ($s=10$) of solve times in seconds for each method across all experiments \label{tab:times}}
\begin{tabular}{lccc}
\toprule
                   & 9-node & 14-node & 33-node \\ \midrule
WLS with 100\% FAD & 17.0   & 6.6     & 13.1    \\
BCG into WLS       & 31.5   & 16.4    & 53.1    \\
Static Penalty     & 79.0   & 105.7   & 105.3   \\
Dynamic Penalty    & 86.0   & 121.1   & 100.5   \\ \bottomrule
\end{tabular}
{}
\end{table}

Table \ref{tab:times} shows the shifted geometric means of the solve times for each model in seconds. While the penalized methods have longer times, the results presented involve prototype code; we anticipate that substantial speedups could be obtained from more mature implementation. Moreover, WLS generally runs well within time limits in practice, allowing for perhaps up to an order of magnitude slowdown as an acceptable range.



\section{Conclusion}
We developed a physics-informed tensor completion method for distribution system state estimation with sparse and noisy measurements. The method provides a unified alternative to both purely data-driven tensor completion and classical weighted least squares. Compared with model-free tensor completion, the proposed formulation promotes estimates that are more consistent with network physics. Compared with WLS, it can operate more effectively in low-observability regimes. 

We also derived a deterministic error bound for the estimator. Under a restricted stability condition, the estimation error is controlled by the measurement noise and by the mismatch between the true nonlinear power-flow equations and the linearized physics residuals. This provides an interpretable condition under which data fidelity and physics consistency jointly lead to accurate recovery.

Numerical experiments on standard IEEE networks show that the proposed physics-informed tensor completion models improve state recovery relative to purely data-driven tensor completion and WLS. These results suggest that incorporating convex physics penalties into gauge-norm tensor completion is a promising approach for data-efficient state estimation in distribution networks.

\paragraph{\footnotesize \textbf{Disclaimer}
\noindent The views expressed in this article are those of the authors and do not reflect the official policy or position of the United States Air Force, United States Department of Defense, or United States Government.}

\bibliographystyle{unsrt}  
\bibliography{tse_references}  






\end{document}